\newtheorem{theorem}{Theorem}[section]
\newtheorem{proposition}[theorem]{Proposition}
\newtheorem{lemma}[theorem]{Lemma}
\newtheorem{definition}[theorem]{Definition}
\theoremstyle{plain}
\numberwithin{equation}{theorem}
\theoremstyle{remark}
\newtheorem{remark}[theorem]{Remark}
\title[Singular Intersections]{Singular intersections of subgroups and character varieties}
\subjclass[2010]{Primary: 11G50.  Secondary: 57M25}
\keywords{Zilber-Pink conjecture,Character varieties, Dehn filling, Rigidity}
\author[March\'e]{Julien March\'e}
\address{Institut de Math\'{e}matiques\\
Universit\'{e} Pierre et Marie Curie\\
75252 Paris c\'{e}dex 05\\
France}
\email{julien.marche@imj-prg.fr, }
\author[Maurin]{Guillaume Maurin}
\address{Institut de Math\'{e}matiques\\
Universit\'{e} Pierre et Marie Curie\\
75252 Paris c\'{e}dex 05\\
France}
\email{guillaume.maurin@imj-prg.fr, }
\begin{document}

\begin{abstract}
We prove a global local rigidity result for character varieties of 3-manifolds into $\rm{SL}_2$. Given a 3-manifold with toric boundary $M$ satisfying some technical hypotheses, we prove that all but a finite number of its Dehn fillings $M_{p/q}$ are globally locally rigid in the following sense: every irreducible representation $\rho:\pi_1(M_{p/q})\to\rm{SL}_2(\mathbb{C})$ is infinitesimally rigid, meaning that $H^1(M_{p/q},\textrm{Ad}_\rho)=0$. 

This question arose from the study of asymptotics problems in topological quantum field theory developed in \cite{cm2}. The proof relies heavily on recent progress in diophantine geometry and raises new questions of Zilber-Pink type. The main step is to show that a generic curve lying in a plane multiplicative torus intersects transversally almost all subtori of codimension 1. We prove an effective result of this form, based mainly on a height upper bound of Habegger.
\end{abstract}
\maketitle

\section{Introduction}\label{intro}
Let $M$ be a compact connected oriented 3-manifold without boundary. For any integer $k$ called level, the quantum Chern-Simons theory associated to the group SU$_2$ and the level $k$ gives an invariant $Z_k(M)\in \mathbb{C}$ called Witten-Reshetikhin-Turaev invariant. This invariant was introduced in \cite{witten} as a path integral, and constructed rigorously by Reshetikhin and Turaev using the representation theory of the quantum group $U_q sl_2$, see \cite{rt}. Formally, one can write

\[Z_k(M)=\int e^{ik\textrm{CS}(A)}\mathcal{D}A.\]
In this expression, $A$ is a 1-form on $M$ with values in the Lie algebra su$_2$ and 
\[\textrm{CS}(A)=-\frac{1}{4\pi}\int_M\rm{Tr}(A\wedge dA+\frac{2}{3}A\wedge [A\wedge A]).\]

The measure $\mathcal{D}A$ is of course ill-defined but Witten understood its cut-and-paste properties from which Reshitikhin and Turaev constructed the invariant rigorously. Applying formally the stationary phase expansion to this path integral, it localizes around the critical points of the Chern-Simons functional which correspond to the flat connections, that is 1-forms $A$ satisfying $dA+\frac{1}{2}[A\wedge A]=0$. The gauge equivalence classes of such connections correspond to conjugacy classes of representations $\rho:\pi_1(M)\to \mathrm{SU}_2$. Witten obtained formally the following asymptotic expansion:

\begin{equation}\label{conjwitten}
Z_k(M)=\sum_{\rho} e^{\frac{i\pi}{4}m(\rho)+ik \mathrm{CS}(\rho)}\sqrt{T(M,\rho)}+O(k^{-1/2}).
\end{equation}
In this formula, $\rho$ runs over the conjugacy classes of irreducible representations from $\pi_1(M)$ to SU$_2$, $m(\rho)$ is an element of $\mathbb{Z}/8\mathbb{Z}$ called spectral flow and $T(M,\rho)$ is the Reidemeister torsion of $M$ twisted by the representation Ad$_\rho$ of $\pi_1(M)$ on su$_2$.

This formula is proved in very few cases, one of the difficulties being that the Reidemeister torsion is defined only for those irreducible representations $\rho$ for which the space $H^1(M,\textrm{Ad}_\rho)$ vanishes. The space $H^1(M,\textrm{Ad}_\rho)$ can be identified to the Zariski tangent space of the character variety $X(M)$ at $\chi_\rho$ (see Section \ref{character}). Hence, a necessary condition for the Witten asymptotic formula to make sense is that the character variety is reduced of dimension 0. 

If $M$ is a compact, connected and oriented 3-manifold with toric boundary, we call Dehn surgery the result of gluing back to $M$ a solid torus. Let $\phi$ be a homeomorphism from $\partial M$ to $\partial D^2\times S^1$ reversing the orientation. It is well known that the homeomorphism type of the manifold $M\cup_\phi D^2\times S^1$ only depends on the homotopy class of the simple curve $\gamma=\phi^{-1}(S^1\times\{1\})\subset\partial M$. Hence we will denote by $M_\gamma$ this 3-manifold without boundary and call it the Dehn filling of $M$ with slope $\gamma$. 

In \cite{cm2}, Charles and the first author proved that the Witten conjecture holds for $M_\gamma$ if $M=S^3\setminus V(K)$ is the complement of a tubular neighborhood of the figure eight knot $K$, the linking number lk$(\gamma,K)$ is not divisible by 4 and the character variety $X(M_\gamma)$ is reduced of dimension 0. 
The strategy adopted in that article should generalize to any knot provided one has a strong version of the AJ-conjecture and some information on the Reidemeister torsion. 

The condition on the character variety to be reduced appeared as a technical point which was hard to check even in the case of the figure eight knot. However, we prove in this article that for a broad class of varieties $M$, this condition is satisfied for all but a finite number of slopes $\gamma$. More precisely, we show:

\begin{theorem}\label{theochar}
Suppose that $M$ is a compact connected oriented irreducible 3-manifold with toric boundary such that 
\begin{enumerate}
\item The map $r:X(M)\to X(\partial M)$ induced by the inclusion $\partial M\subset M$ is proper.
\item The character variety $X(M)$ is reduced.
\item The image by $r$ of the singular points of $X(M)$ are not torsion points of $X(\partial M)$ (see Section \ref{character}).
\end{enumerate}
Then for all but a finite number of slopes $\gamma$, the variety $X(M_\gamma)$ is reduced of dimension 0. Moreover, the number of exceptions can be effectively bounded. 
\end{theorem}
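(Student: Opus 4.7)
The plan is to reinterpret $X(M_\gamma)$ as a scheme-theoretic intersection inside $X(\partial M)$. Since $\pi_1(M_\gamma)$ is the quotient of $\pi_1(M)$ by the normal closure of $\gamma$, restriction identifies $X(M_\gamma)$ with the preimage $r^{-1}(V_\gamma)\subset X(M)$, where $V_\gamma\subset X(\partial M)$ is the subscheme cut out by $\chi(\gamma)=2$. Under the standard identification of $X(\partial M)$ with the quotient of $(\mathbb{C}^*)^2$ by the involution $(x,y)\mapsto(x^{-1},y^{-1})$, if $(p,q)$ are the coordinates of $\gamma$ in a chosen basis of $\pi_1(\partial M)$, then $V_\gamma$ lifts to the union of two codimension-one subtori $\{x^py^q=\pm 1\}$. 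Hypothesis~(i) makes $r$ proper, so the image $C:=r(X(M))$ is a closed subvariety; combined with~(ii) and the standard lower bounds on the dimension of character varieties of $3$-manifolds with toric boundary, this forces $X(M)$ to be pure of dimension $1$ and $C$ a curve in the $2$-dimensional torus. Consequently $X(M_\gamma)$ is $0$-dimensional whenever $C\not\subset V_\gamma$, which fails for at most one slope. Reducedness at $\chi\in X(M_\gamma)$ is then equivalent to the three conditions (a) $\chi$ is smooth on $X(M)$, (b) the finite map $r:X(M)\to C$ is unramified at $\chi$, and (c) $V_\gamma$ meets $C$ transversally at $r(\chi)$.

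\textbf{Singular and ramification points contribute only finitely many slopes.}
The singular locus $\Sigma\subset X(M)$ is finite, $X(M)$ being reduced of dimension~$1$. By hypothesis~(iii), each image $r(\sigma)$, $\sigma\in\Sigma$, lifts to a non-torsion point $(a,b)\in(\mathbb{C}^*)^2$; the lattice $\{(p,q)\in\mathbb{Z}^2:a^pb^q=\pm 1\}$ then has rank at most one and contains at most one primitive vector up to sign, so at most $2|\Sigma|$ slopes $\gamma$ can have $V_\gamma$ meeting $r(\Sigma)$, with an effective bound. The ramification locus of $r$ is likewise finite; provided $C$ is not contained in any translate of a proper algebraic subgroup of $(\mathbb{C}^*)^2$---a genericity statement to be extracted from~(iii) together with the non-constancy of $r$ on each component---the images of the ramification points are automatically non-torsion, and the same torsion-avoidance argument bounds effectively the slopes violating~(b).

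\textbf{The main step: effective transversality via Habegger's height bound.}
It remains to treat condition~(c): the curve $C\subset(\mathbb{C}^*)^2$ should meet all but effectively finitely many codimension-one subtori $\{x^py^q=\pm 1\}$ transversally. This is the Zilber-Pink type statement advertised in the abstract and is the technical heart of the paper. The plan is to combine an upper bound on the archimedean height of a non-transverse intersection point of $C$ with a codimension-one subtorus---Habegger's inequality, applied to the anomalous locus of $C$---with a Bogomolov-Zhang type lower bound on the heights of non-torsion algebraic points of $C$. The two bounds are incompatible outside an explicit finite family of slopes, at which points transversality must therefore hold; effectivity follows by tracking the constants in Habegger's argument. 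This step is the main obstacle of the proof: everything else is an essentially formal reduction, while here the $3$-manifold problem must be translated into a genuine Diophantine statement in the multiplicative group and resolved by non-trivial tools of unlikely-intersection theory.
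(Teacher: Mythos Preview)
Your overall architecture matches the paper's, but two steps contain genuine gaps.

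First, the scheme identification $X(M_\gamma)\simeq r^{-1}(V_\gamma)$ with $V_\gamma=\{\chi(\gamma)=2\}$ is incorrect. Writing $u=x^py^q$ on $\mathbb{G}_{\mathrm m}^2$, one has $f_\gamma-2=u+u^{-1}-2=u^{-1}(u-1)^2$, so $V_\gamma$ lifts to the \emph{doubled} subtorus $(x^py^q-1)^2=0$, not to the reduced $H_{p,q}$ (and certainly not to $\{x^py^q=\pm1\}$). Equivalently, $df_\gamma$ vanishes at every character with $\rho(\gamma)=\mathrm{Id}$, so $r^{-1}(V_\gamma)$ is never reduced at the points you care about, and your criterion (a)--(c) cannot be read off from that fiber product. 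The paper bypasses this by computing $H^1(M_\gamma,\mathrm{Ad}_\rho)$ directly from the Mayer--Vietoris sequence for $M_\gamma=M\cup (D^2\times S^1)$: in the non-central boundary case one gets $H^1(M_\gamma,\mathrm{Ad}_\rho)=(i^*)^{-1}T_{(x,y)}H_{p,q}$ inside $H^1(M,\mathrm{Ad}_\rho)$, and a half-dimensionality argument via Poincar\'e duality shows $i^*$ is already injective at any smooth point of $X(M)$. Thus your condition (b) is automatic and needs no separate ``ramification images are non-torsion'' claim---a claim your argument does not establish anyway. The central boundary case $(x,y)\in\{\pm1\}^2$, which you do not address, is handled separately: there $\rho|_{\partial M}$ is parabolic non-central, and such a $\rho$ can satisfy $\rho(\gamma)=1$ for at most one slope.

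Second, your Diophantine plan does not reach the conclusion. Habegger's theorem gives $h(P)\le c(C)$ uniformly for $P\in C\cap H_{p,q}$, while a Bogomolov--Zhang bound gives $h(P)\ge\varepsilon(C)>0$ for non-torsion $P$; these are compatible and say nothing about $(p,q)$. The missing idea is that \emph{tangency determines the slope}: if $C=\{f=0\}$ and $H_{p,q}$ is tangent to $C$ at a smooth point $P=(x,y)$, then comparing defining equations of the tangent line gives
\[
(p:q)=\Bigl(x\,\tfrac{\partial f}{\partial x}(P):y\,\tfrac{\partial f}{\partial y}(P)\Bigr)\ \text{in }\mathbb{P}^1,
\]
whence $\log\max(|p|,|q|)=h(p:q)\le \delta\,h(P)+O_f(1)$. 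Combined with Habegger's uniform height bound on $C^{[1]}$ this yields an explicit bound on $\max(|p|,|q|)$, hence finitely many exceptional slopes; alternatively, the same formula bounds $\deg(P)$ by a B\'ezout argument and Northcott finishes. No height lower bound enters.
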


It is well-known that $X(\partial M)$ is the quotient of a 2-dimensional torus $\mathbb{G}_{\rm m}^2$ by the involution $\sigma(x,y)=(x^{-1},y^{-1})$. Denote by $\pi: \mathbb{G}_{\rm m}^2\to X(\partial M)$ the quotient. In the setting of the previous theorem, the variety $C=\pi^{-1}r(X(M))$ is a plane curve defined by the so-called $A$-polynomial, see \cite{ccgls}. The following notions will be central to the proof of Theorem \ref{theochar}.
\begin{definition}
Let $C$ and $C'$ be two curves in $\mathbb{G}_{\mathrm{m}}^2$. We say that $C$ intersects $C'$ transversally at $P\in C\cap C'$ if the two curves are smooth at $P$ with distinct tangent lines. We define the singular intersection $C\cap_{\mathrm{sing}} C'$ of $C$ and $C'$ as the set of all points $P\in C\cap C'$ where the two curves are smooth with equal tangent lines.
\end{definition}
For any couple of relatively prime integers $(p,q)$, let $H_{p,q}$ be the subtorus of $\mathbb{G}_{\mathrm{m}}^2$ defined by the equation $x^py^q=1$.  
Through some standard argument in character varieties, we reduce the proof of the previous theorem to show that $C$ intersects transversally $H_{p,q}$ for almost all $(p,q)$.\par  This fact is connected to recent questions in diophantine geometry surrounding the Zilber-Pink conjecture. Ineffectively, it follows from the 1999 bounded height property of Bombieri, Masser and Zannier (see Theorem 1 in \cite{Bombieri-Masser-Zannier-1}). Effective versions of the latter were worked out by Habegger over the years (see appendix B1 of \cite{Habegger-thesis}, Theorem 7 in \cite{Habegger-3} and \cite{Habegger-4}), allowing us to give an explicit upper bound on the maximal size of a couple $(p,q)$ such that $C$ has non-empty singular intersection with a translate of $H_{p,q}$.\par
 This upper bound might be of interest for the applications of Theorem \ref{theochar} in topology. It only depends on quantities that can be computed from an equation $f(x,y)=0$ defining $C$ in $\mathbb{G}_{\mathrm{m}}^2$. The polynomial $f$ is involved through its total and partial degrees and its logarithmic Weil height $h(f)$ (see section \ref{dio} for the definition).
 \begin{theorem}\label{plane_curves_intro}
 Let $C$ be a curve in $A$ with defining equation $f(x,y)=0$ for an irreducible polynomial $f\in\bar{\mathbb{Q}}[X,Y]$. Let $\delta=\mathrm{deg}(f)$, $\delta_x=\mathrm{deg}_x(f)$ and $\delta_y=\mathrm{deg}_y(f)$. Assume $C$ is not a translate of a subtorus. Then, for any translate $\gamma H_{p,q}$ with non-empty singular intersection with $C$, the quantity $\max(|p|,|q|)$ is at most $$\delta^3\,\mathrm{exp}\left((6.10^5+1)\delta^4\max\left(\delta_x\delta_y, h(f)\right)\right)\,.$$
 In particular, the union $C^{\{1\}}$ of all singular intersections of the form $C\cap_{\mathrm{sing}}H_{p,q}$ is a finite set.
 \end{theorem}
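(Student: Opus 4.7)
My plan is to translate the problem into a bounded-height question in a higher-dimensional torus and apply the effective Habegger bound. At a smooth point $P=(a,b)\in C$, tangency of $C$ with $\gamma H_{p,q}$ is equivalent to the single algebraic relation $p\cdot b f_y(a,b) = q\cdot a f_x(a,b)$; that is, the logarithmic Gauss map $\theta\colon C\dashrightarrow\mathbb{P}^{1}$, $P\mapsto [xf_x:yf_y]_P$, sends $P$ to $[p:q]\in \mathbb{P}^{1}(\mathbb{Q})$. The hypothesis that $C$ is not a translate of a subtorus translates exactly into $\theta$ being non-constant, so the theorem reduces to an effective bound on the denominators of the rational values of $\theta$ attained at $\bar{\mathbb{Q}}$-points of $C$.

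To bring Habegger's bound to bear, I would lift $C$ to a curve $\widetilde C$ in some $\mathbb{G}_{\mathrm m}^{N}$ via an auxiliary embedding of the form
\[
\iota\colon (x,y)\longmapsto \bigl(x,\, y,\, x f_x(x,y),\, y f_y(x,y),\ldots\bigr),
\]
with just enough extra coordinates so that the pair of conditions ``$P\in\gamma H_{p,q}$'' and ``tangency at $P$'' assemble into the single condition that $\iota(P)$ lies on a coset of a codimension-$2$ algebraic subgroup of $\mathbb{G}_{\mathrm m}^{N}$ parameterised by $(p,q)$. The non-subtorus hypothesis on $C$ is then used to verify that $\widetilde C$ is non-anomalous in the sense of Bombieri--Masser--Zannier.

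Applying now Habegger's effective height upper bound (\cite{Habegger-thesis}, \cite{Habegger-3}, \cite{Habegger-4}) to $\widetilde C$ and the family of codimension-$2$ cosets just identified yields an explicit upper bound on $h(\iota(P))$, and hence on $h(P)$. The constant $6\cdot 10^{5}$ in the statement reflects Habegger's explicit constant, the factor $\delta^{4}$ reflects the arithmetic and geometric degrees of $\widetilde C$ inside $\mathbb{G}_{\mathrm m}^{N}$, and $\max(\delta_x\delta_y,h(f))$ is the natural upper bound for the height of $\widetilde C$ in terms of $f$, obtained by pushing forward the defining data through $\iota$.

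Once $h(P)$ is under control, one reads off $(p,q)$ from $\theta(P)$: standard height comparisons for the evaluation of a rational function yield $\log\max(|p|,|q|)=h(\theta(P)) \le c(\delta)\bigl(h(P)+h(f)\bigr)$, which gives the claimed estimate. The finiteness of $C^{\{1\}}$ is then automatic since each of the finitely many admissible pairs $(p,q)$ contributes only finitely many intersection points. I expect the principal obstacle to be Step~2: crafting $\iota$ so that the \emph{linear} tangency condition in $\mathbb{G}_{\mathrm m}^{2}$ becomes a genuine \emph{toric} condition in $\mathbb{G}_{\mathrm m}^{N}$, and certifying the non-anomalous property of $\widetilde C$ from the bare hypothesis on $C$; once this is in place, the remainder is careful bookkeeping of Habegger's constants along the chain $f\leadsto\widetilde C\leadsto h(P)\leadsto\max(|p|,|q|)$.
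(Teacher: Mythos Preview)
Your opening paragraph correctly identifies the logarithmic Gauss map $\theta(P)=(xf_x:yf_y)|_P$ and the equivalence between tangency with $\gamma H_{p,q}$ and $\theta(P)=(p:q)$; the paper does exactly this. The gap is in your Steps~2--3. You propose to lift $C$ to $\widetilde C\subset\mathbb{G}_{\mathrm m}^N$ and encode both conditions as membership of $\iota(P)$ in a codimension-$2$ \emph{coset}, then invoke Habegger. But Habegger's bounded height theorem, like the original Bombieri--Masser--Zannier result, concerns intersections with algebraic \emph{subgroups} (equivalently, torsion cosets), not arbitrary cosets. Concretely, under $\iota(x,y)=(x,y,xf_x,yf_y)$ the tangency condition reads $z_3 z_4^{-1}=p/q$, a translate of $\{z_3=z_4\}$ by the non-torsion constant $p/q$; there is no height bound for points of a curve lying on arbitrary cosets (trivially, every point lies on some coset of any fixed subtorus). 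The obstacle you flag in your last paragraph is therefore real and cannot be overcome in this form.

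The paper sidesteps it entirely by \emph{not} making the tangency condition toric. The single membership condition $P\in H_{p,q}$ already places $P$ in $C^{[1]}\subset\mathbb{G}_{\mathrm m}^2$, so Habegger's effective bound applies directly to the original curve $C$---no lift, and no non-anomalous verification beyond the stated hypothesis on $C$---yielding $h(P)\le 3\cdot 10^5\,\delta^3\max(\delta_x\delta_y,h(f))$. The tangency condition is used only afterwards, through the elementary inequality
\[
h(p:q)=h(\theta(P))\le \delta\,h(P)+h(f)+\log\bigl(\delta^2(\delta+1)/2\bigr),
\]
which is a direct estimate on heights of polynomial values (the paper's Lemma~\ref{upper bound}). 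Combining the two gives the stated bound. In short: keep your Steps~1, 4, 5, and replace Steps~2--3 by a direct codimension-$1$ application of Habegger to $C$ itself.
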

 We also prove a mild strengthening of the last sentence that looks like a perfect analogue of the Zilber-Pink conjecture in the context of plane singular intersections.\par
 In its multiplicative form --that is, when the ambient space is a multiplicative torus $T=\mathbb{G}_{\mathrm{m}}^n$-- the Zilber-Pink conjecture predicts what happens to a subvaritety $X$ when intersected to the union of all algebraic subgroups of fixed codimension $m$ (see \cite{Zilber} and \cite{Pink} for the original conjectures and \cite{Zannier} for a recent panorama of the subject).\par Under the assumption that $X$ is not contained in a proper algebraic subgroup of $T$, it is the statement that the subsets $X^{[m]}$ of $X$ defined by
 \begin{eqnarray}\label{zp}
 X^{[m]}=\bigcup_{\begin{array}{c} \mathrm{codim}\, H= m\\ \zeta\ \mathrm{torsion}\end{array}}X(\bar{\mathbb{Q}})\cap \zeta H(\bar{\mathbb{Q}})
 \end{eqnarray} are not Zariski-dense in $X$ for $m\ge \dim X+1$, where the union runs over all subtori $H$ of codimension $m$ and all torsion points $\zeta$ of $T$.\par Note that, in the particular case of a curve $C$ lying in $\mathbb{G}_{\mathrm{m}}^2$, the assumption on $C$ means precisely that $C$ is not a translate of a subtorus by a torsion point. In comparison with the hypotheses of Theorem \ref{plane_curves_intro}, this is weaker, yet it turns out to be sufficient for the finiteness of  $C^{\{1\}}$. Under this assumption, we can even prove the finiteness of a slightly larger subset of $C$. Its definition is derived from formula (\ref{zp}) for $C^{[1]}$ by changing all intersections for singular intersections.\par
 \begin{theorem}\label{tor_intro}
If $C$ is a curve in $\mathbb{G}_{\mathrm{m}}^2$ that is not a translate of a subtorus by a torsion point, then
$$C^{\{1,\mathrm{tor}\}}=\bigcup_{\begin{array}{c} p\wedge q  =  1\\ \zeta\ \mathrm{torsion}\end{array}}C(\bar{\mathbb{Q}})\cap_{\mathrm{sing}} \zeta H_{p,q}(\bar{\mathbb{Q}})$$ is a finite subset of $C$. 
 \end{theorem}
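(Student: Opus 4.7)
The plan is to split on whether $C$ is itself a translate of a subtorus of $\mathbb{G}_{\mathrm{m}}^2$, since the hypothesis of Theorem \ref{tor_intro} differs from that of Theorem \ref{plane_curves_intro} precisely on non-torsion subtorus-translates.

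First, suppose $C$ is not a translate of any subtorus. Then Theorem \ref{plane_curves_intro} applies and provides an effective upper bound on $\max(|p|,|q|)$ over all coprime pairs for which some (not necessarily torsion) translate $\gamma H_{p,q}$ has non-empty singular intersection with $C$. So only finitely many pairs $(p,q)$ can contribute to the union defining $C^{\{1,\mathrm{tor}\}}$. For each such $(p,q)$, I would next argue that $C$ contains only finitely many points where its tangent line matches that of some coset of $H_{p,q}$. Writing $f$ for an irreducible defining equation of $C$, the tangent direction to $\zeta H_{p,q}$ at a point $(x_0,y_0)$ is $(-qx_0, py_0)$ independently of $\zeta$, so the tangency locus on $C$ is cut out by $py\,f_y - qx\,f_x = 0$. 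Were this to vanish identically on $C$, the vector field generating $H_{p,q}$ would be tangent to $C$ at every point, so $C$ would be $H_{p,q}$-invariant and hence a coset of $H_{p,q}$---contradicting the case hypothesis. The tangency locus is therefore finite, and the union over finitely many $(p,q)$ is finite.

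Second, suppose $C = \gamma_0 H_{a,b}$ for some coprime $(a,b)$. The hypothesis on $C$ means that no torsion point lies in the coset $\gamma_0 H_{a,b}$. At any point $(x_0,y_0)\in C$, the tangent direction to $C$ is $(-bx_0, ay_0)$, so a singular intersection with some $\zeta H_{p,q}$ requires $(-bx_0, ay_0)$ and $(-qx_0, py_0)$ to be proportional, i.e.\ $bp=aq$, which by coprimality forces $(p,q)=(a,b)$. But then $C$ and $\zeta H_{a,b}$ are two cosets of the same subtorus: they are either equal or disjoint, and equality would exhibit $C$ as a translate of $H_{a,b}$ by the torsion point $\zeta$, which is excluded by hypothesis. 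Hence $C^{\{1,\mathrm{tor}\}}=\emptyset$ in this case.

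The main obstacle, in my view, lies in the first case: Theorem \ref{plane_curves_intro} must be used both to constrain $(p,q)$ to finitely many pairs and then---via the tangent vector field argument---to turn the remaining pointwise condition into a finite set on $C$. The second case is elementary coset geometry once the tangent direction is written down, and crucially relies on the exclusion of torsion translates in the hypothesis, without which $C^{\{1,\mathrm{tor}\}}$ could coincide with the whole of $C$.
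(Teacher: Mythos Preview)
Your proof is correct. The case split and the handling of Case~2 (where $C$ is a non-torsion translate of a subtorus) match the paper's argument essentially verbatim.

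In Case~1, your route differs from the paper's. You invoke Theorem~\ref{plane_curves_intro} to restrict to finitely many pairs $(p,q)$, then observe that for each fixed $(p,q)$ the tangency locus $\{P\in C_{\mathrm{reg}} : py\,f_y(P)=qx\,f_x(P)\}$ is finite, since the vector field generating $H_{p,q}$ being everywhere tangent to $C$ would force $C$ to be a coset of $H_{p,q}$. The paper instead argues that $C^{\{1,\mathrm{tor}\}}$ has bounded height as a subset of $C^{[1]}$ (Bombieri--Masser--Zannier) and bounded degree as a subset of $C^{\{1,A(\bar{\mathbb{Q}})\}}$ (Lemma~\ref{NoP}), and then concludes by Northcott's theorem. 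Your argument is more direct and sidesteps Northcott entirely; the paper's approach, by contrast, isolates the two structural ingredients---bounded height from the multiplicative-dependence side, bounded degree from the tangency side---in a way that makes the parallel with the Zilber--Pink framework explicit and delivers an effective height bound on $C^{\{1,\mathrm{tor}\}}$ as a byproduct. Both routes ultimately rest on the same bounded-height input, and your finiteness-of-tangency-locus step is essentially the content of inequality~(\ref{degP}) in the paper.
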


It is well known that, in the Zilber-Pink conjecture, the codimension value $m=\dim X+1$ is optimal for Zariski non-density. If $m$ is decreased further, then $X^{[m]}$ contains $X^{[\dim X]}$ that is dense in $X$ for all $X$. In this respect, the main feature of Theorem \ref{tor_intro} is to show that positive multiplicity of intersection can make up for a codimension drop among the $H$'s: going from $C^{[2]}$ to the larger subset $C^{[1]}$ generates infinitely many new points, but restricting to the case of positive multiplicity yields $C^{\{1,\mathrm{tor}\}}$ and finiteness is recovered.\par
This line of thought goes further than the case of plane curves and makes sense in a more general framework, leading to new conjectures of Zilber-Pink type. These generalizations will be addressed in a separate article.\par 
Finally, the last topic we study here is the relation between subsets of the form $C^{\{1,\mathrm{tor}\}}$ and $C^{[2]}$, showing that the first can be seen as a subset of the second type for a Zilber-Pink-like problem that takes place in a slightly different ambient space (see Theorem \ref{mixed I} and Remark \ref{mixed II}).\par
{\bf Acknowledgments:} We would like to thank L. Charles, D. Bertrand and P. Philippon for their kind interest and P. Habegger for guiding us through his work on effective bounded height properties. A different question related to character varieties of Dehn filling  was solved with the same kind of tools by B. Jeon in \cite{jeon}. We thank I. Agol for pointing it to us.

\section{Character variety and a reduction}\label{character}
Let $\Gamma$ be a finitely generated group. We denote by $R(\Gamma)$ the algebraic variety of all representations $\rho:\Gamma\to\rm{SL}_2(\bar{\mathbb{Q}})$. This variety is generally used over $\mathbb{C}$ by topologists whereas it is actually defined over $\mathbb{Z}$. We adopt here the field $\bar{\mathbb{Q}}$ which is more convenient for our purposes. The group SL$_2(\bar{\mathbb{Q}})$ acts on $R(\Gamma)$ by $g.\rho=g\rho g^{-1}$: we denote the algebraic quotient by $X(\Gamma)=R(\Gamma)//\rm{SL}_2(\bar{\mathbb{Q}})$. We refer to \cite{lm,cs} for the general theory and collect here some facts.

\begin{enumerate}
\item Given a representation $\rho\in R(\Gamma)$ we define its character $\chi_\rho:\Gamma\to\bar{\mathbb{Q}}$ by the formula $\chi_\rho(\gamma)=\rm{Tr}\,\rho(\gamma)$. As a set, $X(\Gamma)$ is the quotient of $R(\Gamma)$ by the relation $\rho\sim\rho'$ iff $\chi_\rho=\chi_{\rho'}$. This justifies the name {\it character variety}. 
\item  If $\rho,\rho'$ are two elements of $R(\Gamma)$ with $\chi_\rho=\chi_{\rho'}$ and $\rho$ irreducible, then $\rho$ and $\rho'$ are conjugated. 
\item The algebra of regular functions on $X(\Gamma)$ is generated by the so-called {\it trace functions} defined for any $\gamma\in \Gamma$ by $f_\gamma(\rho)=\rm{Tr}\,\rho(\gamma)$.
\item A representation $\rho\in R(\Gamma)$ is reducible if and only if for all $\alpha,\beta\in \Gamma$ one has $f_{[\alpha,\beta]}(\rho)=2$. In particular the set of reducible characters is Zariski-closed in $X(\Gamma)$ and is denoted by $X^{\rm red}(\Gamma)$ whereas its complement is denoted by $X^{\rm irr}(\Gamma)$. 
\item At an irreducible representation $\rho$, there is a natural isomorphism $T_{\chi_\rho}X(\Gamma)\simeq H^1(\Gamma,\textrm{Ad}_\rho)$. 
\item If $\Gamma=\mathbb{Z}^2$, we consider the morphism $\pi:\mathbb{G}_{\mathrm{m}}^2\to X(\Gamma)$ mapping $(x,y)$ to the character of the representation $\rho_{x,y}$ defined by 
\[\rho_{x,y}(a,b)=\begin{pmatrix}x^ay^b& 0 \\ 0 &x^{-a}y^{-b}\end{pmatrix}.\]
 It is well-known that $\pi$ induces an isomorphism between the quotient of $\mathbb{G}_{\mathrm{m}}^2$ by the involution $\sigma(x,y)=(x^{-1},y^{-1})$ and $X(\Gamma)$. In particular, we will denote by $X(\Gamma)_{\rm tor}$ the image by $\pi$ of the torsion points of $\mathbb{G}_{\mathrm{m}}^2$. 
\item If $\phi:\Gamma\to \Gamma'$ is a group homomorphism, it induces an algebraic morphism $\phi^*:X(\Gamma')\to X(\Gamma)$. 
\end{enumerate}

If $M$ is a connected compact oriented manifold, we set $X(M)=X(\pi_1(M))$. If $M$ is a surface or a 3-manifold as in Theorem \ref{theochar}, then it is an Eilenberg-Maclane space, which means that there is a natural isomorphism $H^1(\pi_1(M),\text{Ad}_\rho)\simeq H^1(M,\text{Ad}_\rho)$. Let $i:\partial M\to M$ be the inclusion morphism: it induces a map $i_*:\pi_1(\partial M)\to\pi_1(M)$. We denote by $r$ the map $(i_*)^*:X(M)\to X(\partial M)$ induced by the inclusion and call it the restriction map. 

If $M$ is a connected compact oriented 3-manifold with toric boundary, one can understand representations of $M_\gamma$ for a given slope $\gamma\subset \partial M$ in the following way: by Van-Kampen theorem, the fundamental group of $M$ is the amalgamated product $\pi_1(M)\underset{\pi_1(\partial M)}{*}\pi_1(D^2\times S^1)$. Moreover, the map $\pi_1(\partial M)\to\pi_1(D^2\times S^1)$ is surjective with kernel generated by $\gamma$ hence one has 
$\pi_1(M_{\gamma})=\pi_1(M)/\langle \gamma\rangle$ where $\langle \gamma\rangle$ is the normal closure of $\gamma$. 

In particular, a representation $\rho:\pi_1(M_{\gamma})\to \rm{SL}_2(\bar{\mathbb{Q}})$ is the same as a representation of $\rho:\pi_1(M)\to \rm{SL}_2(\bar{\mathbb{Q}})$ such that $\rho(\gamma)=1$. In terms of character varieties, $X(M_{\gamma})$ fits in the following diagram (which may not be cartesian):

\[\xymatrix{& X(M_\gamma)\ar[dl]\ar[dr] & \\ X(M)\ar[dr]^r & & X(D^2\times S^1)\ar[dl]_{r'} \\ & X(\partial M) & }\]
The image of $r'$ is the projection of a subtorus of $\mathbb{G}_{\rm m}^2$ by the map $\pi$. We will reduce Theorem \ref{theochar} to considerations on the intersection of $\pi^{-1} r(X(M))$ with subtori of $\mathbb{G}_{\rm m}^2$. We start with a technical lemma.

\begin{lemma}
Let $M$ be a manifold satisfying the assumptions of Theorem \ref{theochar}, then every irreducible component of $X(M)$ has dimension 1. 
\end{lemma}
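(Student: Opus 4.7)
The plan is to combine an upper bound on $\dim X(M)$ coming from the properness of $r$ with standard lower bounds, treating the reducible and ``mixed'' components of $X(M)$ separately. Since $r:X(M)\to X(\partial M)$ is proper and both varieties are affine, $r$ is finite, so every irreducible component $Z$ of $X(M)$ satisfies $\dim Z\leq \dim X(\partial M)=2$.

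I would then split the components into those contained in the reducible locus $X^{\rm red}(M)$ and those carrying a dense open subset of irreducible characters. In the first case, the identification $X^{\rm red}(M)\simeq\mathrm{Hom}(H_1(M),\mathbb{G}_{\rm m})/\sigma$ gives $\dim X^{\rm red}(M)=\mathrm{rank}\,H_1(M)$, and the same identification shows that $r(X^{\rm red}(M))$ is contained in the $1$-dimensional subvariety of $X(\partial M)$ coming from the rank-one image of $H_1(\partial M)\to H_1(M)$ (half-lives-half-dies). Finiteness of $r$ then forces $\mathrm{rank}\,H_1(M)=1$, so every reducible component has dimension exactly $1$. In the second case, Thurston's lower bound for $\mathrm{SL}_2$-character varieties of 3-manifolds with one torus boundary yields $\dim Z\geq 1$.

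It remains to rule out $\dim Z=2$ for a mixed component $Z$. If $\dim Z=2$, then $r|_Z$ is finite surjective onto the irreducible surface $X(\partial M)$, hence generically \'etale in characteristic zero, so $\mathrm{rank}\,dr=2$ on a Zariski-dense open subset of $Z$. To reach a contradiction I would pick a torsion character $t\in X(\partial M)$ of order at least three lying outside the proper closed subvariety $r(X^{\rm red}(M))$, and then take a preimage $\chi=\chi_\rho\in Z$. Then $\rho$ is irreducible (since $t\notin r(X^{\rm red}(M))$) and by (iii) combined with (ii) the point $\chi$ is smooth in $X(M)$, so
\[
T_\chi Z\simeq H^1(\pi_1(M),\mathrm{Ad}_\rho)
\]
has dimension $2$. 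Under this identification $dr$ is the cohomological restriction to $\pi_1(\partial M)$, whose image is Lagrangian for the cup-product symplectic form on $H^1(\pi_1(\partial M),\mathrm{Ad}_\rho)$ (Poincar\'e-Lefschetz duality for the orthogonal local system $\mathrm{Ad}_\rho$ on the oriented 3-manifold $M$). Because $t$ has order at least $3$, the eigenvalue character $\lambda$ of $\rho|_{\pi_1(\partial M)}$ satisfies $\lambda^{\pm 2}\neq 1$; a short group-cohomology computation then gives $\dim H^1(\pi_1(\partial M),\mathrm{Ad}_\rho)=2$, so the Lagrangian image has dimension $1$. Hence $\mathrm{rank}\,dr\leq 1$ at $\chi$. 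Since such points $\chi$ are Zariski-dense in $Z$ (torsion is dense in $X(\partial M)$ and $r|_Z$ is surjective), lower semi-continuity of rank forces $\mathrm{rank}\,dr\leq 1$ on the entire smooth locus of $Z$, contradicting generic \'etaleness.

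The delicate step is the last paragraph: identifying the differential $dr$ at $\chi$ with the restriction map in group cohomology and invoking the Lagrangian property in the precise form needed. The remaining ingredients (finiteness of $r$ from properness, the $H^1$ computation on $\partial M$, and density of admissible torsion characters) are routine given the hypotheses.
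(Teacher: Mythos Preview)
Your argument is correct, and at its heart uses the same tool as the paper: the half-dimensional (Lagrangian) image of $i^*:H^1_\rho(M)\to H^1_\rho(\partial M)$, combined with properness of $r$. The paper's proof is considerably shorter and more direct. It simply takes a smooth point $\chi_\rho$ of an arbitrary component $Y$, observes that $\mathrm{rk}\,i^*=\tfrac12\dim H^1_\rho(\partial M)=\dim H^0_\rho(\partial M)\in\{1,3\}$, and concludes immediately that $\dim r(Y)=1$, hence $\dim Y=1$ by properness. There is no case split on reducible versus mixed components, no appeal to Thurston's lower bound, and no use of hypothesis (iii).

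Your separate treatment of components inside $X^{\rm red}(M)$ via half-lives-half-dies is actually more careful on one point the paper glosses over (the identification $T_{\chi_\rho}X(M)\simeq H^1$ requires $\rho$ irreducible). On the other hand, your use of (iii) to manufacture smooth points over torsion characters is an unnecessary detour: a \emph{generic} point of a mixed component $Z$ is already smooth (by reducedness (ii)), irreducible (since $Z$ is mixed), and has non-central restriction to $\partial M$ (the central locus in $X(\partial M)$ being finite). At such a point the Lagrangian property gives $\mathrm{rk}\,dr=1$ directly, which already contradicts $\dim r(Z)=2$; the density-of-torsion and generic-\'etaleness machinery can be dropped.
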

\begin{proof}
From now, we denote the local system Ad$_\rho$ with a subscript $\rho$. Let $Y$ be an irreducible (reduced) component of $X(M)$ and $\chi_\rho$ be a smooth point of it. A standard argument involving Poincar\'e duality (see \cite{HK} p. 42) shows that the rank of the map $i^*:H^1_\rho(M)\to H^1_\rho(\partial M)$ is half the dimension of $H^1_\rho(\partial M)$. By Poincar\'e duality again, $\textrm{rk }i^*=\dim H^0_\rho(\partial M)\in\{1,3\}$. As $r$ is proper, $r(Y)$ is a subvariety of the $2$-dimensional variety $X(\partial M)$. This shows that $r(Y)$ has dimension 1 and because $r$ is proper, $Y$ also has dimension 1. 
\end{proof}

\begin{proposition}
Let $M$ be a manifold satisfying the assumptions of Theorem \ref{theochar} and fix an homeomorphism between $\partial M$ and $S^1\times S^1$. A slope $\gamma$ corresponds to a pair $(p,q)$ of relatively prime integers.

Given $\chi_\rho$ a character of $X(M_{p/q})$, we denote by the same letter its restriction to $X(M)$. By the above remarks,  $r(\chi_\rho)=\pi(x,y)$ for some $(x,y)\in \bar{\mathbb{Q}}^2$ with $x^py^q=1$.

{\bf Case 1:} $x\ne \pm 1$ or $y\ne \pm 1$. 

 In that case, one can suppose that up to conjugation $\rho\circ i^*=\rho_{x,y}$. One has $H^1(M_{p/q},\text{Ad}_\rho)=(i^*)^{-1}T_{x,y}H_{p,q}$ where $i^*:H^1(M,\text{Ad}_\rho)\to H^1(\partial M,\text{Ad}_{\rho_{x,y}})$ is induced by the restriction map. 

In particular, if $\chi_\rho$ is a smooth point of $X(M)$ and $\pi^{-1}r(X(M))$ intersects $H_{p,q}$ transversally, then $H^1(M_{p/q},\text{Ad}_\rho)=0$. 

{\bf Case 2:} $x=\pm 1$ and $y= \pm 1$. 

If $\chi_\rho$ is a smooth point of $X^{\rm irr}(M)$, then $\rho$ factors through at most 1 Dehn filling $M_{p/q}$. 
\end{proposition}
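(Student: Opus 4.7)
The plan is to compute $H^1(M_{p/q},\mathrm{Ad}_\rho)$ via Mayer-Vietoris for $M_{p/q}=M\cup_{\partial M}V$, with $V=D^2\times S^1$ the added solid torus. Irreducibility of $\rho$ forces $H^0(M,\mathrm{Ad}_\rho)=0$, and in both cases the restriction $H^0(V,\mathrm{Ad}_\rho)\to H^0(\partial M,\mathrm{Ad}_\rho)$ will turn out to be an isomorphism, so Mayer-Vietoris collapses to
\[H^1(M_{p/q},\mathrm{Ad}_\rho)\cong(i^*)^{-1}(\mathrm{im}\,r_V^*)\,,\]
where $r_V^*:H^1(V,\mathrm{Ad}_\rho)\to H^1(\partial M,\mathrm{Ad}_\rho)$ is the restriction to $\partial V$. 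The whole strategy is then to identify $\mathrm{im}\,r_V^*$ explicitly and compare it with the image of $i^*$.

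For Case 1, first conjugate so that $\rho\circ i^*=\rho_{x,y}$; this is possible because $(x,y)\ne(\pm1,\pm1)$ makes $\rho|_{\partial M}$ stabilize a unique pair of eigenlines. Decomposing $\mathfrak{sl}_2=\mathbb{C}H\oplus\mathbb{C}E\oplus\mathbb{C}F$ under $\mathrm{Ad}_{\rho_{x,y}}$, the off-diagonal summands have vanishing cohomology on both $\partial M$ and $V$, so everything reduces to the invariant $\mathbb{C}H$ summand. The canonical identification $H^1(\partial M,\mathrm{Ad}_{\rho_{x,y}})\cong T_{(x,y)}\mathbb{G}_{\mathrm{m}}^2$ can be made explicit, and choosing a curve $c$ on $\partial M$ with $c\cdot\gamma=1$ to represent the generator of $\pi_1(V)\cong\mathbb{Z}$ shows that $\mathrm{im}\,r_V^*$ is exactly the line in $H^1(\partial M,\mathbb{C})\cdot H$ that vanishes on $\gamma$, which under the identification is exactly $T_{(x,y)}H_{p,q}$. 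This yields the first formula. For the transversality conclusion, invoke $H^1(M,\mathrm{Ad}_\rho)=T_{\chi_\rho}X(M)$ at the smooth point (a $1$-dimensional space, by the preceding lemma), so that $i^*$ is essentially the differential of $r:X(M)\to X(\partial M)$; its image lifts through the étale map $\pi$ at $(x,y)$ to $T_{(x,y)}C$. Transversality forces $T_{(x,y)}C\cap T_{(x,y)}H_{p,q}=\{0\}$, and hence $H^1(M_{p/q},\mathrm{Ad}_\rho)=0$.

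Case 2 proceeds through the same Mayer-Vietoris, but now $\mathrm{Ad}_\rho$ is the trivial local system on both $\partial M$ and $V$. Hence $H^1(\partial M,\mathrm{Ad}_\rho)\cong H^1(T^2,\mathbb{C})\otimes\mathfrak{sl}_2$ and $\mathrm{im}\,r_V^*=L_\gamma\otimes\mathfrak{sl}_2$, the three-dimensional subspace over the line $L_\gamma\subset H^1(T^2,\mathbb{C})\cong\mathbb{C}^2$ dual to $(p,q)$. At the smooth irreducible $\chi_\rho$, the image of $i^*$ is a line spanned by some vector $v=e_1\otimes A+e_2\otimes B$ with $A,B\in\mathfrak{sl}_2$. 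The decisive observation is that such a rank-one tensor lies in $L\otimes\mathfrak{sl}_2$ for at most one line $L\subset\mathbb{C}^2$, namely the direction of proportionality of $A$ and $B$ when these are parallel, so the inclusion $\mathrm{im}\,i^*\subset\mathrm{im}\,r_V^*$, which is the obstruction to $H^1(M_{p/q},\mathrm{Ad}_\rho)=0$, can hold for at most one primitive slope.

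The main technical obstacle is the explicit identification $\mathrm{im}\,r_V^*=T_{(x,y)}H_{p,q}$ (and its tensor-product version in Case 2). It reduces to choosing compatible dual bases of $H_1(\partial M)$ and $H^1(\partial M)$, checking that the cellular cocycle on $T^2$ representing the pulled-back class from $V$ annihilates the slope $\gamma=p\mu+q\lambda$, and matching this kernel via the canonical isomorphism $H^1(T^2,\mathbb{C})\cong T_{(x,y)}\mathbb{G}_{\mathrm{m}}^2$ to the defining tangent equation $p\,\mathrm{d}\log x+q\,\mathrm{d}\log y=0$ of $H_{p,q}$.
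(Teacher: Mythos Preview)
Your Case~1 is correct and follows essentially the same route as the paper: Mayer--Vietoris for $M_{p/q}=M\cup V$, surjectivity at the $H^0$ level because $H^0_\rho(V)\to H^0_\rho(\partial M)$ is an isomorphism, and the identification of $\mathrm{im}\,r_V^*$ with $T_{(x,y)}H_{p,q}$. Your appeal to irreducibility to kill $H^0_\rho(M)$ is unnecessary (and not assumed in Case~1), but harmless, since surjectivity already follows from the $V$-summand alone.

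Your Case~2, however, has a genuine gap. You assert that $\mathrm{Ad}_\rho$ is the \emph{trivial} local system on $\partial M$ and on $V$. This would require $\rho|_{\partial M}$ to be central (image in $\{\pm I\}$), but the hypothesis only says that the \emph{character} of $\rho|_{\partial M}$ is $\pi(x,y)$ with $x,y\in\{\pm 1\}$. Unlike in Case~1, one cannot conjugate $\rho|_{\partial M}$ to the diagonal $\rho_{x,y}$: two reducible representations with the same character need not be conjugate. In fact, the central case is \emph{incompatible} with the smoothness hypothesis: if $\mathrm{Ad}_\rho$ were trivial on $\partial M$, then $\dim H^1_\rho(\partial M)=6$ and the half-dimensionality argument (Poincar\'e duality, as in the preceding lemma) forces $\mathrm{rk}\,i^*=3$, contradicting $\dim H^1_\rho(M)=1$. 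So the scenario you analyze cannot occur, and your tensor argument in $H^1(T^2,\mathbb{C})\otimes\mathfrak{sl}_2$ is vacuous.

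The paper's argument in Case~2 is quite different and more elementary. One first uses the half-dimensionality to conclude $\dim H^0_\rho(\partial M)=1$, which forces $\rho|_{\partial M}$ to be parabolic non-central: after conjugation, $\rho(a,b)=\pm\begin{pmatrix}1&au+bv\\0&1\end{pmatrix}$ with $(u,v)\neq(0,0)$. Then $\rho(p,q)=I$ forces $pu+qv=0$, which determines the slope $(p:q)$ uniquely. Note also that the conclusion to be proved is that $\rho$ \emph{factors through} at most one $M_{p/q}$, not a statement about $H^1(M_{p/q},\mathrm{Ad}_\rho)$; your argument targets the latter rather than the former.
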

\begin{proof}

The main point is a consequence of the Mayer-Vietoris sequence applied to the decomposition $M_{p/q}=M\cup D^2\times S^1$ given below. 
\begin{multline*}
H^0_\rho(M)\oplus H^0_\rho(D^2\times S^1)\to H^0_\rho(S^1\times S^1)\to \\
 H^1_\rho(M_{p/q})\to H^1_\rho(M)\oplus H^1_\rho(D^2\times S^1)\to H^1_\rho(\partial M)
 \end{multline*}
By the assumption $(x,y)\ne(\pm 1,\pm 1)$, the first map is onto and the result follows from the fact that the map $H^1_\rho(D^2\times S^1)\to H^1_\rho(S^1\times S^1)$ is the differential of the inclusion $H_{p,q}\subset A$.

In the second case, if $\chi_\rho$ is a smooth point of $X^{\rm irr}(M)$, then Poincar\'e duality implies that $H^0_\rho(\partial M)$ has dimension 1 and hence $\rho\circ i^*$ is a parabolic non-central representation. More explicitly, one has $\rho(a,b)=\pm \begin{pmatrix} 1 & au+bv \\ 0 & 1\end{pmatrix}$ for some $(u,v)\ne (0,0)$. Hence we can have $\rho(p,q)=1$ for at most one slope $[p:q]\in \mathbb{P}^1(\mathbb{Q})$ and the result follows.
\end{proof}

Set $C=\pi^{-1}(r(X(M))\subset \mathbb{G}_{\rm m}^2$. In the setting of Theorem \ref{theochar}, this is a curve defined by the so-called $A$-polynomial introduced in \cite{ccgls}.
Applying Theorem \ref{plane_curves} to $C$, we obtain that $C$ is transverse to $H_{p,q}$ at smooth points of $C$ for all but a finite number of slopes $(p,q)$. Moreover by assumption, singular points of $X(M)$ do not map to torsion points of $X(\partial M)$ and hence belong to at most one subtorus. The only remaining case is a singular point $(x,y)$ of $C$ which is not a singular value of $r$. In the neighborhood of $(x,y)$, $C$ is a union of branches with non-trivial tangents. Removing these tangents from the list of admissible $(p,q)$, we finally proved Theorem \ref{theochar}.

We would like to end this section with some comments on the topological meaning of the assumptions of Theorem \ref{theochar}. 
\begin{enumerate}
\item By Culler-Shalen theory (see \cite{shalen}), the properness assumption of $r:X(M)\to X(\partial M)$ is implied by the assumption that $M$ is small, meaning that it does not contain any closed incompressible surfaces (not boundary parallel). This assumption holds for a large family of knots such as 2-bridge knots. Such a hypothesis is necessary as the global local rigidity does not hold for instance for Whitehead doubles of knots. 
\item The reducibility of the character variety is a notoriously hard question. There is no reason to believe that the character variety of a knot complement in $S^3$ is reduced, however, we do not know any counter-example. 
\item The last assumption on singular points seems hard to check without knowing explicitly the character variety of $M$. However it is well-known that the singular points of $X(M)$ belonging to $X^{\rm red}(M)$ are encoded in the roots of the Alexander polynomial of $M$. Hence, the assumption contains in particular the fact that the Alexander polynomial does not vanish at roots of unity.
\end{enumerate}

\section{Plane curves}\label{dio}
This section is devoted to the proof of Theorem \ref{plane_curves_intro}. It relies heavily on the notions of {\em degree} and {\em height}.\par
Recall that the {\em degree} $\mathrm{deg}(P)$ of a point $P=(x_0:\ldots:x_n)$ of $\mathbb{P}_n(\bar{\mathbb{Q}})$ is defined as the minimal degree of a number field containing a system of homogeneous coordinates of $P$. Equivalently, assuming for simplicity that $x_0\neq 0$, $$\mathrm{deg}(P)=\left[\mathbb{Q}\left(\frac{x_1}{x_0},\ldots,\frac{x_n}{x_0}\right):\mathbb{Q}\right]\,.$$\par
The {\em logarithmic Weil height} $h(P)$ of $P$ is defined as follows. Let $K$ be a number field containing a system of homogeneous coordinates $(x_0,\ldots,x_n)$ of $P$. At any place $v$ of $K$, there is a unique absolute value $|\cdot|_v$ associated to $v$ such that $|p|_v\in\{1/p,1,p\}$, for any prime number $p$. Let $K_v$ ({\em respectively $\mathbb{Q}_v$}) be the completion of $K$ ({\em resp.} $\mathbb{Q}$) with respect to this absolute value ({\em resp.} the absolute value induced by $|\cdot|_v$). Then, $h(P)$ is given by the formula:
$$h(P)=\sum_v\frac{[K_v:\mathbb{Q}_v]}{[K:\mathbb{Q}]}\log\left(\max_{0\le i\le n} |x_i|_v\right)\,,$$
where the sum runs over all places of $K$. Because of the normalization factors $[K_v:\mathbb{Q}_v]/[K:\mathbb{Q}]$, the right-hand side neither depend on $K$, nore on the system of homogenous coordinates $(x_0,\ldots,x_n)$ that is chosen for $P$. Therefore, $h(P)$ is well-defined.\par Then, for any non-zero vector $v$ of $\bar{\mathbb{Q}}^{n+1}$, we define the {\em projective height of $v$} as the quantity $h(P_v)$, where $P_v$ is the point of $\mathbb{P}_n(\bar{\mathbb{Q}})$ with homogenous coordinates $v$. Finally, we define the height $h(f)$ of a polynomial $f$ with coefficients in $\bar{\mathbb{Q}}$ as the projective height of its vector of coefficients.

\par
 Roughly speaking, the height of a point measures its arithmetic complexity. For example if $P\in \mathbb{P}_1(\mathbb{Q})$ is a rational point written under the form $P=(p:q)$, where $p$ and $q$ are coprime integers, the definition above yields $$h(p:q)=\log(\max(|p|,|q|))\,.$$
In particular, for any positive real number $M$, there are only finitely many rational points $P$ of the projective line such that $h(P)\le M$.\par More generally, Northcott's theorem asserts that if the degree and height are bounded over a subset $S$ of $\mathbb{P}_n(\bar{\mathbb{Q}})$, then $S$ is a finite set. This crucial fact is one of the main feature of the height.\par As we will see, it can be used to obtain a second proof of Theorem \ref{plane_curves_intro}, except for the upper bound on $\max(|p|,|q|)$ which follows from a different route. This estimate will be derived from a height upper bound of Habegger through the following lemma.
\begin{lemma}\label{upper bound}
Let $f_1,f_2\in\bar{\mathbb{Q}}[X,Y]$ be polynomials of total degree $N_1$ and $N_2$ and let $N=\max(N_1,N_2)$. Let $\mathcal F$ be the family of coefficients appearing in either $f_1$ or $f_2$ and let $h(\mathcal{F})$ be the projective height of $\mathcal{F}$. Then, for any point $P=(x,y)$ of $\mathbb{A}^2(\bar{\mathbb{Q}})$,
$$h(f_1(P):f_2(P))\le Nh(x:y:1)+h(\mathcal{F})+\log \left(\begin{array}{c} N+2 \\2\end{array}\right)\,.$$ 
 \end{lemma}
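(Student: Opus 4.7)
The plan is a direct computation using the definition of the Weil height as a sum of local contributions. Let $K$ be a number field containing the coordinates of $P$ and all coefficients appearing in $f_1$ and $f_2$, so that both sides of the desired inequality can be computed inside $K$. Writing each polynomial as $f_i(X,Y) = \sum_{j+k \le N_i} a_{i,j,k} X^j Y^k$, I would estimate $|f_i(P)|_v$ at each place $v$ of $K$ using the triangle inequality. At an archimedean place this yields
\[
|f_i(P)|_v \;\le\; \binom{N_i+2}{2}\,\cdot\,\max_{j,k}|a_{i,j,k}|_v\,\cdot\,\max(|x|_v,|y|_v,1)^{N_i},
\]
since there are $\binom{N_i+2}{2}$ monomials of total degree at most $N_i$ in two variables; at a non-archimedean place the ultrametric inequality replaces this combinatorial factor by $1$. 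Taking the maximum over $i\in\{1,2\}$ and using $N_i\le N$, I obtain at every place
\[
\max(|f_1(P)|_v,|f_2(P)|_v)\;\le\;\varepsilon_v\,\cdot\,M_v\,\cdot\,\max(|x|_v,|y|_v,1)^N,
\]
where $M_v=\max_{a\in\mathcal{F}}|a|_v$ and $\varepsilon_v=\binom{N+2}{2}$ if $v$ is archimedean, $\varepsilon_v=1$ otherwise.

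The second step is to take logarithms, multiply by the normalized local degrees $[K_v:\mathbb{Q}_v]/[K:\mathbb{Q}]$, and sum over all places of $K$. The contribution coming from the factor $M_v$ collapses to $h(\mathcal{F})$ by the very definition of the projective height of the coefficient vector, while the contribution from $\max(|x|_v,|y|_v,1)^N$ collapses to $N\,h(x:y:1)$. The contribution from $\varepsilon_v$ equals $\log\binom{N+2}{2}$ multiplied by $\sum_{v\mid\infty}[K_v:\mathbb{Q}_v]/[K:\mathbb{Q}]$, and the latter sum equals $1$ by the standard identity $\sum_{v\mid\infty}[K_v:\mathbb{Q}_v]=[K:\mathbb{Q}]$ (coming from $K\otimes_{\mathbb{Q}}\mathbb{R}\simeq\prod_{v\mid\infty}K_v$). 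Combining the three contributions gives precisely the claimed inequality.

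There is no serious obstacle here: this is a routine local-global manipulation of the sort that underlies most elementary height estimates. The only point deserving care is the handling of the combinatorial factor $\binom{N+2}{2}$: because it appears only at archimedean places, summing the weights turns what could look like a multiplicative factor into a single additive term $\log\binom{N+2}{2}$. Everything else is bookkeeping, and the bound is sharp enough for the applications to Habegger's effective height estimate that will drive the proof of Theorem \ref{plane_curves_intro}.
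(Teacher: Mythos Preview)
Your proposal is correct and follows essentially the same route as the paper's proof: bound each $|f_i(P)|_v$ locally via the triangle inequality (picking up the combinatorial factor $\binom{N+2}{2}$ only at archimedean places), pass to the maximum over $i$, and sum the normalized local contributions using $\sum_{v\mid\infty}[K_v:\mathbb{Q}_v]=[K:\mathbb{Q}]$. The only cosmetic difference is that the paper assumes without loss of generality $N_1\ge N_2$ before taking the maximum, whereas you handle both $f_i$ symmetrically and invoke $N_i\le N$ at the end.
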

 \begin{proof}
 Let $f_1(X,Y)=\sum_{k+\ell\le N_1} a_{k,\ell}X^kY^{\ell}$ and let $v$ be a place of a number field $K$ containing both coordinates of $P$ and the family $\mathcal{F}$. Then 
 \begin{eqnarray}\label{abs}
 \left|f_1(P)\right|_v\le\varepsilon(v)\max_{k,\ell}(|a_{k,\ell}|_v)\max(1,|x|_v,|y|_v)^{N_1}\,,
 \end{eqnarray}
 where
$$\varepsilon(v)= \left\{\begin{array}{cc}\left(\begin{array}{c} N_1+2 \\2\end{array}\right) &\ \mathrm{if}\ v\, |\, \infty\,, \\  \\ 1 &\ \mathrm{otherwise}\,. \end{array}\right.$$
 Assume for simplicity that $N_1\ge N_2$, so $f_2(X,Y)$ is again of the form $\sum_{k+\ell\le N_1} b_{k,\ell}X^kY^{\ell}$, for some $b_{k,l}\in\bar{\mathbb{Q}}$. Then an upper bound similar to (\ref{abs}) holds for $|f_2(P)|_v$. The only difference is the $a_{k,\ell}$'s are replaced by  the $b_{k,\ell}$'s. Therefore, $$\max(|f_1(P)|_v,|f_2(P)|_v)\le\varepsilon(v)\max\{|f|_v\mid f\in \mathcal{F}\}\max(1,|x|_v,|y|_v)^{N_1}$$
 and it follows that
 \begin{eqnarray*}
 h(f_1(P):f_2(P))&\le & N_1h(x:y:1)+h(\mathcal{F})+\sum_{v\, |\, \infty}\log(\varepsilon(v))\\
 &=& N_1h(x:y:1)+h(\mathcal{F}) +\log \left(\begin{array}{c} N_1+2 \\2\end{array}\right)\,,
 \end{eqnarray*}
 which concludes the proof as $N_1=\max(N_1,N_2)$.
 \end{proof}
 We are now ready to prove our upper bound. From now on, we let $A=\mathbb{G}_{\mathrm{m}}^2$ be the ambient multiplicative torus. Recall that, for any curve $C$ lying in $A$, we denote by $C^{\{1\}}$ the union of all singular intersections of the form $C\cap_{\mathrm{sing}} H_{p,q}$, where $(p,q)$ varies among all couples of relatively prime integers.
 \begin{theorem}\label{plane_curves}
 Let $C$ be a curve in $A$ with defining equation $f(x,y)=0$ for an irreducible polynomial $f\in\bar{\mathbb{Q}}[X,Y]$. Let $\delta=\mathrm{deg}(f)$, $\delta_x=\mathrm{deg}_x(f)$ and $\delta_y=\mathrm{deg}_y(f)$. Assume $C$ is not a translate of a subtorus. Then, for any translate $\gamma H_{p,q}$ with non-empty singular intersection with $C$, the quantity $\max(|p|,|q|)$ is at most $$\delta^3\,\mathrm{exp}\left((6.10^5+1)\delta^4\max\left(\delta_x\delta_y, h(f)\right)\right)\,.$$
 Moreover, $C^{\{1\}}$ is a finite set of effectively bounded degree and height.
 \end{theorem}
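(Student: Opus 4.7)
The plan is to algebraize the singular intersection condition, to use Lemma \ref{upper bound} to reduce a bound on $\max(|p|,|q|)$ to a bound on the height $h(P)$, and to control $h(P)$ by lifting $C$ to a higher-dimensional multiplicative torus and applying Habegger's effective version of the Bombieri--Masser--Zannier bounded height theorem.

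First, I would write out the tangency condition explicitly. At a smooth point $P=(x,y)\in C\cap_{\mathrm{sing}}\gamma H_{p,q}$, the logarithmic tangent of the translate at $P$ is the line $\{p\xi+q\eta=0\}$, whereas the logarithmic tangent of $C$ at $P$ is $\{xf_x(P)\,\xi+yf_y(P)\,\eta=0\}$. Equality of tangent lines amounts to the projective identity
\[
(p:q)=\bigl(xf_x(P):yf_y(P)\bigr)\in\mathbb{P}^1,
\]
so that $\log\max(|p|,|q|)=h(p:q)=h\bigl(xf_x(P):yf_y(P)\bigr)$, using that $(p,q)$ are coprime integers. Next, I would invoke Lemma \ref{upper bound} with $f_1=Xf_X$ and $f_2=Yf_Y$. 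Both are polynomials of total degree at most $\delta$ whose coefficient families are subfamilies of the coefficients of $f$ up to multiplication by integers bounded by $\delta$; hence their projective height is at most $h(f)+\log\delta$. The lemma then gives an inequality of the form
\[
h\bigl(xf_x(P):yf_y(P)\bigr)\le \delta\,h(x:y:1)+h(f)+c(\delta),
\]
for an explicit constant $c(\delta)$, reducing the task to controlling $h(P)=h(x:y:1)$.

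The principal step is obtaining such a control. I would lift $C$ to a curve $\tilde C$ in a higher-dimensional torus $\mathbb{G}_{\mathrm{m}}^N$ (with $N\ge 3$) by adjoining coordinates recording the tangent ratio $xf_x:yf_y$. A singular intersection of $C$ with $\gamma H_{p,q}$ then forces the lifted point $\tilde P$ to lie in an algebraic subgroup of $\mathbb{G}_{\mathrm{m}}^N$ of codimension at least $2$: one relation coming from $P\in\gamma H_{p,q}$ and another from the tangency identity. The assumption that $C$ is not a translate of a subtorus is where one verifies that $\tilde C$ itself is not contained in any proper subtorus coset of $\mathbb{G}_{\mathrm{m}}^N$, which is the non-degeneracy requirement of Habegger's theorem. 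Habegger's bound then yields
\[
h(P)\le \kappa\,\delta^3\max(\delta_x\delta_y,h(f))
\]
for an explicit numerical constant $\kappa$; substituting into the previous inequality gives the stated upper bound on $\max(|p|,|q|)$, the specific prefactor $6\cdot 10^5+1$ emerging from a careful bookkeeping of constants in Habegger's proof. I expect the construction of the lift, the verification of its non-degeneracy, and the numerical bookkeeping to be the main obstacles of the argument.

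For the finiteness of $C^{\{1\}}$ with effectively bounded degree and height, the upper bound on $\max(|p|,|q|)$ restricts us to finitely many subtori $H_{p,q}$; for each of them, Bezout bounds the cardinality of $C\cap H_{p,q}$ and the degree of its points by a quantity linear in $\delta\max(|p|,|q|)$. Combining this degree bound with the height bound from Habegger and applying Northcott's theorem yields the finiteness of $C^{\{1\}}$ together with effective control on both its degree and its height.
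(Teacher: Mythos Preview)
Your first two steps coincide with the paper's: writing the tangency as $(p:q)=(xf_x(P):yf_y(P))$ and then invoking Lemma~\ref{upper bound} to get
\[
h(p:q)\le \delta\,h(x:y:1)+h(f)+\log\!\left(\tfrac{\delta^2(\delta+1)}{2}\right).
\]
Your closing paragraph on the finiteness of $C^{\{1\}}$ via B\'ezout and Northcott also matches the paper.

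The divergence, and the gap, is in how you bound $h(P)$. You propose to lift $C$ into some $\mathbb{G}_{\mathrm{m}}^N$ by adjoining the tangent ratio and to argue that the lifted point $\tilde P$ lies in an algebraic subgroup of codimension~$\ge 2$: one relation from $P\in\gamma H_{p,q}$, a second from the tangency identity. But an algebraic subgroup of $\mathbb{G}_{\mathrm{m}}^N$ is cut out by monomial equations $\prod z_i^{a_i}=\zeta$ with $\zeta$ a root of unity, and the tangency condition is not of this shape under any natural lift. If you adjoin $z=xf_x/(yf_y)$, tangency reads $z=p/q$, which pins $z$ to a non-torsion rational value of $\mathbb{G}_{\mathrm{m}}$; if you adjoin $u=xf_x$ and $v=yf_y$ separately, tangency becomes the \emph{additive} relation $qu=pv$. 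Either way $\tilde P$ is not forced into a codimension-$2$ algebraic subgroup, so Habegger's theorem on $\tilde C$ cannot be invoked as you describe. The non-degeneracy check for $\tilde C$ would also be delicate, since the adjoined coordinate is algebraically dependent on $x,y$.

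The paper's route is shorter and sidesteps this entirely: no lift is used. The single relation $x^py^q=\zeta$ coming from membership already places $P$ in $C^{[1]}$ inside the original $\mathbb{G}_{\mathrm{m}}^2$, and that is exactly the locus to which Habegger's effective Bombieri--Masser--Zannier bound (Theorem~B.1 of his thesis) applies directly:
\[
\max(h_{\mathrm{m}}(x),h_{\mathrm{m}}(y)) \le 3\cdot 10^5\,\delta^3\max\bigl(\delta_x\delta_y,\,h(f)\bigr).
\]
The tangency condition is used only at your step two, not to manufacture an extra subgroup constraint. Substituting via $h(x:y:1)\le h_{\mathrm{m}}(x)+h_{\mathrm{m}}(y)$ then gives the stated estimate; the factor $6\cdot10^5$ is just $2\times 3\cdot10^5$ from this last inequality, and the ``$+1$'' absorbs the lower-order $h(f)$ and logarithmic terms, rather than arising from any reworking of Habegger's internal constants.
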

 \begin{proof}
 Let $P=(x,y)$ be a point belonging to the singular intersection of $C$ and a translate $\gamma H$, with $H=H_{p,q}$; then, $T_PC=T_P (\gamma H)$. The tangent space $T_P(\gamma H)$ is the subspace of $T_PA$ defined by the following equation $$\left(T_P(\gamma H)\right)\ :\ \frac{p}{x}dx+\frac{q}{y}dy=0\,.$$
Similarly,
$$(T_PC)\ :\  \,\frac{\partial f}{\partial x}(P)dx+\frac{\partial f}{\partial y}(P)dy=0\,.$$
Therefore, the equality $T_PC=T_P(\gamma H)$ means that the two vectors of partial derivatives $$\left(\frac{p}{x},\frac{q}{y}\right)\quad \mathrm{and}\quad \left(\frac{\partial f}{\partial x}(P),\frac{\partial f}{\partial y}(P)\right)$$ are colinear.\par Using height theory, we now derive that $\max (|p|,|q|)$ is bounded from above, thereby showing that the possible $H$'s are finitely many. Indeed, $p$ and $q$ are coprime, so $\log( \max (|p|,|q|))=h(p:q)$ is the logarithmic Weil height of the point $(p:q)\in\mathbb{P}^1(\bar{\mathbb{Q}})$. Then, colinearity of the vectors of partial derivatives means that the corresponding points of $\mathbb{P}^1(\bar{\mathbb{Q}})$ are equal or, equivalently, $$\left(p:q\right)=\left(x\frac{\partial f}{\partial x}(P):y\frac{\partial f}{\partial y}(P)\right)\,.$$ 

Hence, using lemma \ref{upper bound} with $f_1=x\,\partial f/\partial x$ and $f_2=y\,\partial f/\partial y$, we obtain the following upper bound:
\begin{eqnarray*}
h(p:q)\le \delta h(x:y:1)+h(\mathcal{F}) +\log\left(\frac{(\delta+1)(\delta+2)}{2}\right)\,,
\end{eqnarray*}
where $\mathcal{F}$ is the family of coefficients of the partial derivatives polynomials $\partial f/\partial x$ and $\partial f/\partial y$.\par Here, we can obtain a slightly better $\mathrm{log}$-term, with numerator $\delta(\delta+1)$ instead of $(\delta+1)(\delta+2)$, by using the relation $h(p:q)=h_{\mathrm{m}}(p/q)$ where the right-hand side is the height of the point $p/q$ of $\mathbb{G}_{\mathrm{m}}(\bar{\mathbb{Q}})$. This height satisfies a triangle inequality relative to multiplication, so
\begin{eqnarray*}\label{tor}
h_{\mathrm{m}}\left(\frac{p}{q}\right)&\le& h_{\mathrm{m}}\left(\frac{p}{q}\,\frac{y}{x}\right) + h_{\mathrm{m}}\left(\frac{x}{y}\right)\\
&= & h\left(\frac{p}{x}:\frac{q}{y}\right)+ h(x:y)\\
&\le &h\left(\frac{\partial f }{\partial x}(P):\frac{\partial f }{\partial y}(P)\right) + h(P)
\end{eqnarray*}
and then we apply our lemma to the partial derivatives of $f$ instead of $x\,\partial f/\partial x$ and $y\,\partial f/\partial y$. We will use this sharper bound in the sequel.\par Finally, any element of $\mathcal{F}$ is a product $k a_{\alpha}$ of a coefficient of $f(X)=\sum_{\alpha}a_{\alpha}x^{\alpha_1}y^{\alpha_2}$ and a positive integer $k$ equal to $\alpha_1$ or $\alpha_2$ pending on which partial derivative of $f$ is considered. In any case, we have $k \le \max(\delta_x,\delta_y)\le \delta$, so $$h(\mathcal{F})\le h(f)+\log\delta$$ and therefore
\begin{eqnarray}\label{upI}
\qquad h(p:q)\le \delta h(x:y:1)+h(f)+\log\left(\frac{\delta^2(\delta+1)}{2}\right)\,.
\end{eqnarray}
\par To conclude the proof, we use an explicit height upper bound for the points of $C^{[1]}$ due to Habegger (see Theorem B.1 in \cite{Habegger-thesis}). It reads as follows: for any point $P=(x,y)$ in $C^{[1]}$, 
\begin{eqnarray}\label{eff_bhc}
\max(h_{\mathrm{m}}(x),h_{\mathrm{m}}(y)) \le 3.10^5\,\delta^3\, \max\left(\delta_x\delta_y, h(f)\right)\,.
\end{eqnarray}
Combining this result with inequality (\ref{upI}) and using the elementary fact that $h(x:y:1)\le h_{\mathrm{m}}(x)+h_{\mathrm{m}}(y)$, we obtain
\begin{eqnarray*}
h(p:q)&\le & 6.10^5\delta^4\max\left(\delta_x\delta_y, h(f)\right)+h(f)+\log\left(\frac{\delta^2(\delta+1)}{2}\right)\\
 & \le&(6.10^5+1)\delta^4\max\left(\delta_x\delta_y, h(f)\right) +\log(\delta^3)
\end{eqnarray*}
and taking exponentials of both sides gives the upper bound of the theorem.\par
Finally, boundedness of $h(P)$ implies boundedness of $\max(|p|,|q|)$, which proves finiteness of the $H$'s. Restricting to the case $\gamma=1$, {\em i.e.} $P\in C^{\{1\}}$, it implies finiteness of the $P$'s, because the assumption on $C$ guarantees that its intersection with any proper subtorus is finite.\par Of course, the effective upper bound on $h(P)$ mentioned in our theorem is Habegger's bound (\ref{eff_bhc}). Whereas the effective upper bound on $\mathrm{deg}(P)$ follows from our bound on $\max(|p|,|q|)$ by applying B\'ezout's theorem to the intersection of $C$ and $H_{p,q}$, which gives $$\mathrm{deg}(P)\ll \max(|p|,|q|)\mathrm{deg}(C)[K_0:\mathbb{Q}]\,,$$ where $K_0$ is any number field such that $C$ is defined over $K_0$ (see the proof of lemma \ref{NoP} below for a detailed exposition of a similar argument).
\par
 \end{proof}
Theorem \ref{plane_curves} is non trivial in the sense that both $C^{[1]}$ and $$C^{\{1, A(\bar{\mathbb{Q}})\}}=\bigcup_{p\wedge q=1} \{P\in C_{\mathrm{reg}}(\bar{\mathbb{Q}})\mid T_PC=T_P(PH_{p,q})\}$$ are infinite for all $C$. The first is obtained from $C^{\{1\}}$ by removing the tangency condition, while the second is the set of smooth points $P\in C(\bar{\mathbb{Q}})$ at which $T_PC$ coincides with the tangent line of a translated $H_{p,q}$ -- without any multiplicative dependance assumption on $P$.\par The case of $C^{[1]}$ is well known. One of the two projections $\pi$ induces a dominant morphism over $C$, so $\pi(C)$ contains a dense open subset of $\mathbb{G}_{\mathrm{m}}$. In particular, it contains infinitely many torsion points and their inverse images form an infinite subset of $C^{[1]}$. \par For $C^{\{1, A(\bar{\mathbb{Q}})\}}$, the argument is completely similar, up to a different choice of the dominant morphism $\pi$. Assuming $f(x,y)=0$ is a defining equation for $C$, with $f$ irreducible, let $\pi=\sigma_C$ be the rational map from $C$ to the dual projective line which sends any smooth point $P\in C(\bar{\mathbb{Q}})$ to the point 
\begin{eqnarray}\label{tan}
\sigma_C(P)=\left(x \frac{\partial f}{\partial x}(P): y \frac{\partial f}{\partial y}(P)\right)
\end{eqnarray}
 of $\mathbb{P}_1^*$. Note that it doesn't depend on the irreducible polynomial $f$ chosen initially. Moreover, $\sigma_C$ has the following property: for any smooth point $P=(x,y)$ of $C(\bar{\mathbb{Q}})$, $$\sigma_C(P)=(\alpha:\beta)\Longleftrightarrow(T_PC)\,:\ \alpha\frac{dx}{x}+\beta\frac{dy}{y}=0\,.$$  It follows that $\sigma_C$ is constant if and only if $C$ is a translated subtorus, in which case $\sigma_C(C)$ is a rational point and $C^{\{1, A(\bar{\mathbb{Q}})\}}=C$. Otherwise, $\sigma_C$ is dominant, so its image contains a dense open subset of the line, hence infinitely many rational points. Finally, the inverse images of these points in $C$ form an infinite subset of $C^{\{1, A(\bar{\mathbb{Q}})\}}$.\par
Hence,  $C^{\{1,A(\bar{\mathbb{Q}})\}}$ is always Zariski-dense in $C$. However, under the assumption of Theorem \ref{plane_curves}, it is a sparse subset in the following sense. 
\begin{lemma}\label{NoP}
Let $C$ be a curve in $A$ defined over a number field $K_0$ and assume $C$ is not a translate of a subtorus. Then, for all $P\in C^{\{1, A(\bar{\mathbb{Q}})\}}$, $$\mathrm{deg}(P)\le \mathrm{deg}(C)^2[K_0:\mathbb{Q}]\,.$$
\end{lemma}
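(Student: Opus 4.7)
The plan is to exploit the dominant morphism $\sigma_C$ already introduced in the excerpt. Any $P\in C^{\{1,A(\bar{\mathbb{Q}})\}}$ satisfies $\sigma_C(P)=(p:q)$ for some coprime pair of integers $(p,q)$, hence lies in the fiber $\sigma_C^{-1}(p:q)\subset C(\bar{\mathbb{Q}})$. Since $(p:q)$ is a rational point of $\mathbb{P}^*_1$, and $C$ is defined over $K_0$, this fiber is stable under the action of $\mathrm{Gal}(\bar{\mathbb{Q}}/K_0)$. Therefore bounding the degree of $P$ reduces to bounding the cardinality of this fiber.

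To bound it, I would describe the fiber as a plane intersection. Taking any defining polynomial $f\in K_0[X,Y]$ for $C$ with $\deg f=\deg C=\delta$, the fiber $\sigma_C^{-1}(p:q)$ coincides with the set of smooth points of $C$ on which the auxiliary polynomial
\[
g(x,y)=q\,x\,\frac{\partial f}{\partial x}-p\,y\,\frac{\partial f}{\partial y}
\]
vanishes. Note that $g\in K_0[X,Y]$ and $\deg g\le\delta$. Provided $f\nmid g$, B\'ezout's theorem applied in $\mathbb{P}^2$ yields $\#(C\cap\{g=0\})\le\delta^2=\deg(C)^2$.

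The key geometric step is to verify that $f$ does not divide $g$. Since $f$ is irreducible and $\deg g\le\deg f$, the only way to have $f\mid g$ is $g=cf$ for some scalar $c$. Differentiating the identity $f(t^q x,t^{-p}y)=t^c f(x,y)$ at $t=1$ shows that $g=cf$ exactly means that $f$ is $(q,-p)$-weighted homogeneous, equivalently that the zero set $C$ is invariant under the one-parameter subgroup $H_{p,q}$. Since $C$ is irreducible, this would force $C$ to be a single coset of $H_{p,q}$, contradicting the hypothesis that $C$ is not a translate of a subtorus. Hence $f\nmid g$ and the B\'ezout bound applies. (This is the part I expect to be the main subtlety — everything else is bookkeeping.)

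Combining the two steps, the Galois orbit of $P$ over $K_0$ has cardinality at most $\deg(C)^2$, so $[K_0(P):K_0]\le\deg(C)^2$. Since $\mathbb{Q}(P)\subset K_0(P)$, one concludes
\[
\deg(P)=[\mathbb{Q}(P):\mathbb{Q}]\le[K_0(P):\mathbb{Q}]=[K_0(P):K_0]\,[K_0:\mathbb{Q}]\le\deg(C)^2[K_0:\mathbb{Q}],
\]
as desired.
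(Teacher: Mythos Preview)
Your proof is correct and follows essentially the same route as the paper: bound the Galois orbit of $P$ over $K_0$ by the cardinality of the fiber $\sigma_C^{-1}(p:q)$, and bound the latter by $\deg(C)^2$ via B\'ezout applied to $C\cap\{g_{p,q}=0\}$. The one place you are more explicit than the paper is the verification that $f\nmid g$; the paper handles this by appealing to the fact, established just before the lemma, that $\sigma_C$ is non-constant (hence dominant) precisely when $C$ is not a translated subtorus, so that each fiber is finite and $C$ cannot occur among the components of $\{g_{p,q}=0\}$. Your weighted-homogeneity argument is simply a direct proof of that same implication.
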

\begin{proof}
Assume first that $C$ is defined over $\mathbb{Q}$, {\em i.e.} $C$ can be defined by an equation $f(x,y)=0$ with coefficients in $\mathbb{Q}$. Moreover, assume $f $ is irreducible over $\bar{\mathbb{Q}}$, so $\mathrm{deg}(f)=\mathrm{deg}(C)$.\par From formula (\ref{tan}), it follows that the rational map $\sigma_C$ is also defined over $\mathbb{Q}$. Hence, so is its fiber over any rational point $(p:q)$ of $\mathbb{P}^*_1$. Therefore, if $P=(x,y)$ belongs to $\sigma_C^{-1}(p:q)$, then so does any Galois conjugate of $P$ over $\mathbb{Q}$. Moreover, the number of distinct conjugates of $P$ is precisely the number of embeddings of $\mathbb{Q}(x,y)$ in $\bar{\mathbb{Q}}$ over $\mathbb{Q}$,  which equals the degree of $P$. Hence, $$\mathrm{deg}(P)\le |\sigma_C^{-1}(p:q)|$$ and the lemma thus follows from the estimate
\begin{eqnarray}\label{degP}
|\sigma_C^{-1}(p:q)|\le \mathrm{deg}(C)^2\,.
\end{eqnarray}
To prove (\ref{degP}), consider the irreducible components $C_1,\ldots, C_r$ of the algebraic subset of $A$ defined by the vanishing of
$$g_{p,q}(x,y)=qx \frac{\partial f}{\partial x}(x,y)-py \frac{\partial f}{\partial y}(x,y)\,.$$
Then, $\sigma_C^{-1}(p:q)$ is the union of the intersections $C_{\mathrm{reg}}\cap C_i$ and $$\sum_{i=1}^r\mathrm{deg}(C_i)\le\mathrm{deg}(g)\le \mathrm{deg}(f) =\mathrm{deg}(C)\,.$$ Using B\'ezout's theorem, it follows that $$|\sigma_C^{-1}(a:b)|\le \sum_{i=1}^r\mathrm{deg}(C)\mathrm{deg}(C_i)\le \mathrm{deg}(C)^2\,,$$
which proves the claim if $C$ is defined over $\mathbb{Q}$.\par In general, $C$ is defined over a number field $K_0$. Considering conjugates over $K_0$ instead of conjugates over $\mathbb{Q}$, the argument above yields $$ \mathrm{deg}(P)\le [K_0(x,y):K_0][K_0:\mathbb{Q}]\le |\sigma_C^{-1}(p:q)|[K_0:\mathbb{Q}]$$
and the lemma follows from (\ref{degP}).
 \end{proof}
\begin{remark}
In comparison with Theorem \ref{plane_curves}, this lemma provides a much sharper bound for the degrees of the points of $C^{\{1\}}$. Recall that the one from the theorem was derived from the fact that $$\mathrm{deg}(P)\ll \max(|p|,|q|)\mathrm{deg}(C)[K_0:\mathbb{Q}]$$ combined with our estimate on $\max(|p|,|q|)$, thereby leading to a large upper bound depending on the height of $C$.
\end{remark}
Applying Northcott's theorem, it follows from boundedness of the degree that any subset of $C^{\{1, A(\bar{\mathbb{Q}})\}}$ of bounded height is finite. We will refer to this as a {\em Northcott property}. In particular, it implies that $$C^{\{1\}}\subset C^{[1]}\cap C^{\{1,A(\bar{\mathbb{Q}})\}}$$ is a finite set, which gives a second proof of Theorem \ref{plane_curves}, except for the upper bound on $\max(|p|, |q|)$.\par
This second approach leads to the following result. Recall that a torsion variety is a translate of a subtorus by a torsion point.
 \begin{theorem}\label{tor}
Let $C$ be a curve in $A$ that is not a torsion variety. Then, the union $C^{\{1,\mathrm{tor}\}}$ of all singular intersections of $C$ with 1-dimensional torsion subvarieties of $A$ is a finite set of effectively bounded height. 
 \end{theorem}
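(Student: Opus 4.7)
The plan is to deduce the theorem from a Northcott-type argument combining two boundedness results already established in this section: Habegger's effective height bound (\ref{eff_bhc}) for $C^{[1]}$ and the degree bound of Lemma \ref{NoP} for $C^{\{1, A(\bar{\mathbb{Q}})\}}$. The new ingredient is merely the observation that $C^{\{1,\mathrm{tor}\}}$ is contained in the intersection of these two sets.

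First I would dispose of the case where $C$ is a translate $gH$ of a subtorus $H=H_{p_0,q_0}$. Since $C$ is not a torsion variety, $g\notin \zeta H$ for any torsion $\zeta$, so every torsion coset $\zeta H$ is disjoint from $C$; any torsion coset $\zeta H_{p,q}$ with $(p,q)\neq (p_0,q_0)$ is the coset of a distinct subtorus and therefore meets $C$ transversally at each of its finitely many intersection points. Hence $C^{\{1,\mathrm{tor}\}}=\emptyset$ in this case and the theorem holds trivially.

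From now on, assume $C$ is not a translate of any subtorus, so that Lemma \ref{NoP} applies to $C$. The key observation is the inclusion
$$C^{\{1,\mathrm{tor}\}}\ \subset\ C^{[1]}\cap C^{\{1, A(\bar{\mathbb{Q}})\}}.$$
Indeed, any $P\in C\cap_{\mathrm{sing}}\zeta H_{p,q}$ is by definition a smooth point of $C$ lying in $C\cap \zeta H_{p,q}(\bar{\mathbb{Q}})$, giving $P\in C^{[1]}$; and its tangent line satisfies $T_PC=T_P(\zeta H_{p,q})=T_P(PH_{p,q})$, giving $P\in C^{\{1, A(\bar{\mathbb{Q}})\}}$. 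Applying inequality (\ref{eff_bhc}) to $P$ yields an effective upper bound on $h(P)$, while Lemma \ref{NoP} yields the uniform upper bound $\mathrm{deg}(P)\le \mathrm{deg}(C)^2[K_0:\mathbb{Q}]$. Finiteness of $C^{\{1,\mathrm{tor}\}}$ then follows from Northcott's theorem, and the effective height bound stated in the theorem is nothing but Habegger's bound itself.

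The main obstacle has already been surmounted in the preceding results of the section; what remains is purely a synthesis. I would emphasise, however, that each boundedness property is indispensable: $C^{[1]}$ and $C^{\{1, A(\bar{\mathbb{Q}})\}}$ are both generically infinite, and it is the conjunction of the multiplicative dependence relation $P\in \zeta H_{p,q}$ with the tangency condition $T_PC=T_P(PH_{p,q})$ --- that is, the definition of \emph{singular} intersection --- which simultaneously triggers the height and the degree controls and thereby allows Northcott's theorem to conclude.
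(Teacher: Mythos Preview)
Your proof is correct and follows essentially the same approach as the paper: both handle the translate-of-a-subtorus case separately (showing $C^{\{1,\mathrm{tor}\}}=\emptyset$ there) and in the generic case exploit the inclusion $C^{\{1,\mathrm{tor}\}}\subset C^{[1]}\cap C^{\{1,A(\bar{\mathbb{Q}})\}}$ to combine the bounded height on $C^{[1]}$ with the degree bound of Lemma~\ref{NoP}, concluding by Northcott. The only cosmetic differences are the order in which the two cases are treated and that you invoke Habegger's effective bound~(\ref{eff_bhc}) directly, whereas the paper cites the Bombieri--Masser--Zannier bounded height theorem at this point.
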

 \begin{proof}
Assume first that $C$ is not a translate of a subtorus. Then, the height is bounded over $C^{[1]}$ (see Theorem 1 in \cite{Bombieri-Masser-Zannier-1}), hence also on its subset $C^{\{1,\mathrm{tor}\}}$. As the latter is also a subset of $C^{\{1,A(\bar{\mathbb{Q}})\}}$, the Northcott property derived from lemma \ref{NoP} proves the claim.\par Now, assume $C$ is a translate $\gamma H$ of a subtorus $H$ of $A$. Then, because of the assumption on $C$, $\gamma$ is non-torsion. Hence $C\neq \zeta H$ for all $\zeta\in A_{\mathrm{tor}}$, so $C\cap \zeta H=\emptyset$ because the two are distinct translates of the same subtorus. Finally, if $H'\neq H$ is a second subtorus, then $T_P(PH')\cap T_P(PH)=0$ at any point $P\in A(\bar{\mathbb{Q}})$. Therefore, $C\cap_{\mathrm{sing}}\zeta H'$ is again empty, which gives $C^{\{1,\mathrm{tor}\}}=\emptyset$ and completes the proof of the theorem.
 \end{proof}
\par 
We conclude this section with an alternative formulation of Theorem \ref{tor} showing the equivalence of this statement to a tangential Zilber-Pink problem. It also explains the analogy between subsets of the form $C^{\{1,\mathrm{tor}\}}$ and $C^{[2]}$: the first correspond to a subset of the second form by considering a section over $C$ of the dual projectivized tangent bundle $\mathbb{P}^*(TA)$. Recall that the fiber of $\mathbb{P}^*(TA)$ over a point $P\in A(\bar{\mathbb{Q}})$ parametrizes lines in $T_PA$.\par For $C$ smooth, the section of interest $\mathcal{C}$ is simply $$\mathbb{P}^*(TC)=\{(P,[T_PC])\mid P\in C(\bar{\mathbb{Q}})\}\,.$$
When $C$ is singular, $\tilde{\sigma}_C(P)=(P,[T_PC])$ only defines a rational section of $\mathbb{P}^*(TA)$ over $C$. We thus define $\mathcal{C}$ as the Zariski closure of the image of $\tilde{\sigma}_C$ in $\mathbb{P}^*(TA)$. We call $\mathcal C$ the {\em tangent section} of $\mathbb{P}^*(TA)$ over $C$.\par
To make the connection between $\tilde{\sigma}_C$ and the $\sigma_C$ defined previously, consider first the trivialization of the cotangent bundle $T^*A$ ({\em resp}. the tangent bundle $TA$) associated to the global 1-forms $dx/x$ and $dy/y$ ({\em resp.} the global vector fields $x\partial/\partial x$ and $y\partial/\partial y$). Through this trivialization, the map $\pi:\mathbb{P}^*(TA)\to A$ can be identified to the first projection $A\times \mathbb{P}_1^*\to A$ and our rational section $\tilde{\sigma}_C$ is then given by $\tilde{\sigma}_C (P)=(P,\sigma_C(P))$.\par
Finally, for any couple of relatively prime integers $(p,q)$ and any torsion point $\zeta$ of $A$, let $\mathcal{H}_{p,q}^{\,\zeta}$ be the tangent section of $\mathbb{P}^*(TA)$ over the torsion variety $\zeta H_{p,q}$.
\begin{theorem}\label{mixed I}
Let $C$ be a curve in $A$ and let $\mathcal C$ be the tangent section of $\mathcal{A}=\mathbb{P}^*(TA)$ over $C$. If $\mathcal{C}$ is not of the form $\mathcal{H}_{p,q}^{\,\zeta}$, then$$\mathcal{C}^{[2]}= \bigcup_{\begin{array}{c} p\wedge q  =  1\\ \zeta\ \mathrm{torsion}\end{array}} \mathcal{C}(\bar{\mathbb{Q}})\cap \mathcal{H}_{p,q}^{\,\zeta}$$ is a finite set.
\end{theorem}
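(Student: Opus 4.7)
The plan is to reduce Theorem \ref{mixed I} to Theorem \ref{tor} by projecting through $\pi\colon\mathcal{A}\to A$. In the trivialization of $\mathbb{P}^*(TA)$ recalled just before formula (\ref{tan}), the tangent direction to $\zeta H_{p,q}$ at any of its points is the kernel of $p\,dx/x+q\,dy/y$, so one has $\mathcal{H}_{p,q}^{\,\zeta}=\zeta H_{p,q}\times\{(p:q)\}$ in $\mathcal{A}\simeq A\times\mathbb{P}_1^*$. Consequently, a point $(P,L)\in\mathcal{C}\cap\mathcal{H}_{p,q}^{\,\zeta}$ is exactly a pair with $P\in C\cap\zeta H_{p,q}$ and $L=(p:q)$. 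I would then split according to whether $C$ is a translate of a subtorus.

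Suppose first that $C$ is not a translate of a subtorus; then $C$ is a fortiori not a torsion variety, so Theorem \ref{tor} applies and $C^{\{1,\mathrm{tor}\}}$ is finite. Above a smooth point $P\in C_{\mathrm{reg}}$ the unique point of $\mathcal{C}$ is $(P,\sigma_C(P))$, and the condition $(P,\sigma_C(P))\in\mathcal{H}_{p,q}^{\,\zeta}$ rewrites as $P\in C\cap_{\mathrm{sing}}\zeta H_{p,q}$; hence $\pi$ injects $\mathcal{C}^{[2]}\cap\pi^{-1}(C_{\mathrm{reg}})$ into $C^{\{1,\mathrm{tor}\}}$, so this piece is finite. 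Since $\mathcal{C}$ is an irreducible curve mapping birationally onto $C$, the fibers of $\pi$ above the finitely many singular points of $C$ are themselves finite; moreover, fixing $(P_0,L)$ in such a fiber, the relation $L=(p:q)$ determines $(p,q)\in\mathbb{P}_1^*$ uniquely and at most one coset $\zeta H_{p,q}$ then contains $P_0$. Thus $\mathcal{C}^{[2]}\cap\pi^{-1}(C_{\mathrm{sing}})$ is also finite.

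Now assume instead that $C=\gamma H_{p_0,q_0}$. A direct computation with a Laurent defining polynomial shows that $\sigma_C$ is constant equal to $(p_0:q_0)$, so $\mathcal{C}=\gamma H_{p_0,q_0}\times\{(p_0:q_0)\}$. The equality $\mathcal{C}=\mathcal{H}_{p,q}^{\,\zeta}$ amounts to $(p,q)=(p_0,q_0)$ in $\mathbb{P}_1^*$ together with $\gamma H_{p_0,q_0}=\zeta H_{p_0,q_0}$, the latter being equivalent to $\zeta\in C$; the exclusion hypothesis therefore translates into: $C$ contains no torsion point. Now $\mathcal{C}\cap\mathcal{H}_{p,q}^{\,\zeta}=\emptyset$ as soon as $(p,q)\neq(p_0,q_0)$ in $\mathbb{P}_1^*$; when $(p,q)=(p_0,q_0)$, the cosets $\gamma H_{p,q}$ and $\zeta H_{p,q}$ are either disjoint or equal, and equality again forces $\zeta\in C$, which we have just excluded. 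Hence $\mathcal{C}^{[2]}=\emptyset$ in this case.

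The only semi-subtle step is the equivalence used in the second case between the hypothesis on $\mathcal{C}$ and the absence of torsion points on $C$; everything else is a routine unpacking of the trivialization followed by an application of Theorem \ref{tor}, and I do not expect a serious obstacle.
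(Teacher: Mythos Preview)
Your proof is correct and follows essentially the same route as the paper: identify $\mathcal{H}_{p,q}^{\,\zeta}$ with $\zeta H_{p,q}\times\{(p:q)\}$ in the trivialization, observe that membership of $\tilde\sigma_C(P)$ in some $\mathcal{H}_{p,q}^{\,\zeta}$ is exactly the condition $P\in C\cap_{\mathrm{sing}}\zeta H_{p,q}$, and invoke Theorem~\ref{tor}. Your case split on whether $C$ is a translated subtorus is unnecessary (the hypothesis on $\mathcal C$ is equivalent to $C$ not being a torsion variety, so Theorem~\ref{tor} applies uniformly), and your explicit treatment of $\pi^{-1}(C_{\mathrm{sing}})$ is more detailed than the paper's one-line ``it suffices to consider smooth points'', but neither addition introduces any error.
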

\begin{remark}\label{mixed II}
In the formula above, intersections are usual intersections, not singular ones. Moreover $\mathrm{codim}(\mathcal{H}_{p,q}^{\,\zeta},\mathcal{A})=2$, so the union of the theorem is indeed a subset of the type $\mathcal{C}^{[2]}$ for a Zilber-Pink-like problem in $\mathcal A$.
\end{remark}
\begin{proof}
It suffices to show that there are finitely many smooth points $P$ of $C$ such that $\tilde{\sigma}_C(P)=(P,[T_PC])$ lands in the union of the $\mathcal{H}_{p,q}^{\,\zeta}$. But assuming $\tilde{\sigma}_C(P)\in\mathcal{H}_{p,q}^{\,\zeta}$ means precisely that there is a point $Q$ in $C'=\zeta H_{p,q}$ such that $$\tilde{\sigma}_C(P)=\tilde{\sigma}_{C'}(Q)\,.$$
In other words, $P=Q$ and $C$ and $C'$ have equal tangent lines at $P$, so $P\in C\cap_{\mathrm{sing}}C'$; finiteness thus follows from Theorem \ref{tor}.
\end{proof}


\begin{thebibliography}{1}

\bibitem{Bombieri-Masser-Zannier-1}
E. Bombieri, D. Masser and U. Zannier, {\it Intersecting a curve with algebraic subgroups of multiplicative groups}, Int. Math. Research Notices {\bf
20} (1999), 1119--1140.

\bibitem {cm2}
L.~Charles et J.~March{\'e}, {\it Knot state asymptotics II. Irreducible representations and the Witten conjecture}, preprint (June 2011).

\bibitem{cs}
M. Culler and P. B Shalen, \emph{Varieties of group representations and splittings of 3-manifolds}, Ann. of Math. (2) \textbf{117} (1983), no.1, 109-146.

\bibitem{ccgls}
D. Cooper, M Culler, H. Gillet, D. D. Long and P. B. Shalen, \emph{Plane curves associated to character varieties of 3-manifolds}, Invent. Math. \textbf{118} (1994), no. 1, 47-84.

\bibitem{Habegger-thesis}
P. Habegger, \emph{Heights and multiplicative relations on algebraic varieties}, thesis, Universit\"at Basel, 2007.

\bibitem{Habegger-3}
P. Habegger, \emph{A Bogomolov property modulo algebraic subgroups}, Bull. S.M.F. \textbf{137} (2009), 93-125.

\bibitem{Habegger-4}
P. Habegger, \emph{Effective height upper bounds on algebraic tori}, preprint (March 2012).

\bibitem{HK}
C. D. Hodgson and S. P. Kerckhoff, \emph{Rigidity of hyperbolic cone manifolds and hyperbolic Dehn surgery}, J. Differential Geometry \textbf{48} (1998) 1--60. 

\bibitem{jeon}
B. Jeon, \emph{Hyperbolic 3-manifolds of bounded volume and trace field degree}, arXiv:1305.0611.

\bibitem{lm}
A. Lubotzky and A. R. Magid, \emph{Varieties of representations of finitely generated groups}, Mem. Amer. Math. Soc. \textbf{58} (1985), no. 336.

\bibitem{Pink}
R. Pink, \emph{A common generalization of the conjectures of Andr\'{e}-Oort, Manin-Mumford, and Mordell-Lang}, preprint (2005), available at {\tt math.ethz.ch/\~{ }pink/ftp/AOMMML.pdf}.

\bibitem {rt}
N.Y. Reshetikhin and V.G. Turaev, {\it Invariants of 3-manifolds via link polynomials and quantum groups.}, Invent. Math. \textbf{92}  (1991), 547--597.

\bibitem {shalen}
P.B. Shalen. {\it Representations of 3-manifold groups.}  Handbook of geometric topology, 955-1044, North-Holland, Amsterdam, 2002.

\bibitem {witten}
E. Witten, {\it Quantum field theory and the Jones polynomial}, Comm. Math. Phys., \textbf{121}, vol.3, (1989), 351--399.

\bibitem{Zannier}
U. Zannier, \emph{Some problems of unlikely intersections in arithmetic and geometry}, Annals of Math. Studies \textbf {181}, Princeton 2012.

\bibitem{Zilber}
B. Zilber, {\it Exponential sums equations and the Schanuel conjecture}, J. London Math. Soc. {\bf 65} (2002), 27--44. 

\end{thebibliography}
\end{document}